\documentclass[10pt]{amsart}

\usepackage{amsmath,amssymb,mathtools}
\usepackage{booktabs}
\usepackage{microtype}
\usepackage[hidelinks]{hyperref}

\newtheorem{theorem}{Theorem}[section]
\newtheorem{proposition}[theorem]{Proposition}

\newtheorem{corollary}[theorem]{Corollary}
\theoremstyle{remark}

\newcommand{\Q}{\mathbf Q}
\newcommand{\Z}{\mathbf Z}
\newcommand{\ii}{\mathrm i}

\title[Equalities of simplest quartic fields]
{Equalities among simplest quartic fields: a complete classification}
\author[Zhi-Lin Zhang]{Zhi-Lin Zhang}

\address{Independent Researcher, Taipei, Taiwan}

\email{hsa00000@gmail.com}

\date{}

\subjclass[2020]{Primary 11R16; Secondary 11J70, 11J68}
\keywords{simplest quartic fields, field isomorphism problem, quartic forms, continued fractions}

\begin{document}

\begin{abstract}
For a positive integer $n$, let
\[
 f_n(X)=X^4-nX^3-6X^2+nX+1
\]
and let $K_n=\Q(\rho_n)$, where $\rho_n$ is a root of $f_n$.
We determine all coincidences among these fields: for distinct positive
integers $m,n$,
\[
 K_m=K_n
 \quad\Longleftrightarrow\quad
 \{m,n\}\in
 \bigl\{\{1,103\},\{2,22\},\{4,956\}\bigr\}.
\]
Thus the three previously known equalities are the only ones.
This extends Hoshi's finite-range classification to all positive integral
parameters and, in particular, subsumes the uniqueness results of
Pincus and Washington.

The proof combines Hoshi's correspondence between equal simplest
quartic fields and primitive solutions of a quartic Thue equation with
estimates of Lettl--Peth\H{o}--Voutier for rational approximations to
two of its real roots.  A Gaussian-integer identity yields a lower bound
for the denominator of the resulting rational approximation; the
continued-fraction information and the approximation estimates then
exclude every parameter exceeding $1000$.
\end{abstract}

\maketitle

\section{Introduction}

For $n\in\Z_{>0}$ set
\begin{equation}\label{eq:fn}
 f_n(X)=X^4-nX^3-6X^2+nX+1
\end{equation}
and let $K_n=\Q(\rho_n)$, where $\rho_n$ is any root of $f_n$.  We also write
\begin{equation}\label{eq:Ft}
 F_t(X,Y)=X^4-tX^3Y-6X^2Y^2+tXY^3+Y^4,
\end{equation}
so that $f_t(X)=F_t(X,1)$.

This polynomial family was studied earlier by M.-N.~Gras.  She proved that,
for $t\in\Z\setminus\{0,\pm3\}$, the polynomial $f_t$ is irreducible over
$\Q$ and defines a real cyclic quartic extension
\cite[p.~16, Proposition~6]{Gras1978}.  The same work contains extensive
numerical computations of class numbers and units in real cyclic quartic
fields.  In its example with parameter $t=22$, Gras observed that a
generating relative unit in the same field corresponds to parameter $t=2$;
in the present notation, this realizes the equality $K_2=K_{22}$
\cite[p.~18]{Gras1978}.

The field-isomorphism problem for this family was later studied by Hoshi, who
established an explicit correspondence with quartic Thue equations and
proved, in particular, that the only coincidences for which the smaller
positive parameter is at most $1000$ are
\[
 \{1,103\},
 \qquad
 \{2,22\},
 \qquad
 \{4,956\}
\]
\cite[Theorem~8.1 and Tables~2--3]{Hoshi}.
More recently, Pincus and
Washington proved several global uniqueness results: there is at most one
other positive parameter when $n\equiv2\pmod4$, and likewise when
$n\equiv8\pmod{16}$ under an additional odd-trace hypothesis on the
fundamental unit of $\Q(\sqrt{n^2+16})$; they also proved finiteness for every
fixed $n$ \cite[Theorem~1]{PW}.  The theorem below gives the complete
classification for all positive integral parameters.  In particular, it
extends Hoshi's finite-range classification and subsumes the uniqueness
conclusions of Pincus and Washington.

Our main result is the following complete classification.

\begin{theorem}\label{thm:main}
Let $m,n$ be distinct positive integers.  Then
\[
 K_m=K_n
 \quad\Longleftrightarrow\quad
 \{m,n\}\in
 \bigl\{\{1,103\},\{2,22\},\{4,956\}\bigr\}.
\]
\end{theorem}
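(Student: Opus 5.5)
The plan follows the strategy announced in the abstract. The ``$\Leftarrow$'' direction is a finite verification. For each pair $\{1,103\}$, $\{2,22\}$, $\{4,956\}$ I would exhibit an explicit rational transformation $X\mapsto (aX+b)/(cX+d)$ carrying a root of one polynomial to a root of the other. Equivalently, I would check that $f_m$ has a root in $K_n$, using the primitive Thue solutions tabulated by Hoshi \cite{Hoshi}.

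For ``$\Rightarrow$'' I would first invoke Hoshi's correspondence \cite[Theorem~8.1]{Hoshi}. It says that if $K_m=K_n$ with $m\ne n$, then there is a primitive integer solution $(x,y)$, with $xy\neq0$ and $|y|$ nontrivial, of a quartic Thue equation of the shape $F_m(x,y)=c$. Here $c$ is a small explicit constant built from $n$ (essentially $c\mid n^2+16$ up to a factor, with $n$ determined by $(x,y)$ via a formula like $n = (\text{quartic in }x,y)/c$). By symmetry I may assume $m<n$. Hoshi's table then lets me assume $m>1000$. The goal is to show that no such solution exists for any $m>1000$.

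The key steps, in order, are as follows.

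(1) Use the factorization $F_m(x,y)=\prod (x-\rho^{(i)}y)$. The real roots of $f_m$ are approximately $m$, $-1/m$, $1/m\cdot(\ldots)$ and $-1$-type Möbius images. From this I deduce that a solution with $|y|$ large forces $x/y$ to be extremely close to one root $\rho^{(i)}$, with $|x/y-\rho^{(i)}|\ll |c|/(m^{2}|y|^4)$ roughly. The four roots are permuted by $X\mapsto (X-1)/(X+1)$, so I would reduce to the two roots treated by Lettl--Pethő--Voutier.

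(2) Obtain a lower bound for $|y|$ in terms of $m$ and $n$. Here I would use the Gaussian-integer identity $F_m(x,y)=\mathrm{Re}$ or norm of $(x+\ii y)^4$ plus a multiple of $m$. Precisely, $F_m(x,y) = \mathrm{Re}\bigl((x+\ii y)^4\bigr)-m\,xy(x^2-y^2)$. This shows that $x+\ii y$ must have norm of size comparable to $n$, and that $n^2+16$ relates to $(x^2+y^2)^{\cdot}$. So $|y|\gg n^{1/4}$, or better, in terms of $m$, some power of $m$.

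(3) Apply the effective irrationality measure of Lettl--Pethő--Voutier, $|\rho - p/q|> 1/(C(m) q^{\kappa(m)})$ with $\kappa(m)<4$ for large $m$. Combined with (1), this gives an upper bound on $|y|$ that contradicts (2) once $m>1000$. Small denominators below the threshold are handled by continued-fraction analysis: the convergents of the root near $m$ or near $-1/m$ have an explicit early expansion $[m;\ldots]$, and the candidate $x/y$ must be a convergent or intermediate fraction. Checking these few explicitly shows they give only $n=m$ or trivial solutions.

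The main obstacle is step (3), specifically matching the exponents. The Lettl--Pethő--Voutier bound has a constant $C(m)$ growing polynomially in $m$, so the contradiction only works if the lower bound in (2) is strong enough, something like $|y|\ge m^{\alpha}$ with $\alpha$ beating $\log C(m)/(4-\kappa(m))$. I would first try to get $|y|\gg m$ from the fact that $x/y$ must agree with the continued fraction of $\rho$ beyond its first partial quotient, which is of size $m$. Failing that, I would exploit the divisibility $c\mid n^2+16$ to push $|y|$ up.
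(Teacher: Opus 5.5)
Your overall architecture (Hoshi's correspondence, the Gaussian identity, Lettl--Peth\H{o}--Voutier, the early partial quotients) is the right one. However, the step that makes these pieces fit together is missing, and the substitute you propose for it is false.

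The Lettl--Peth\H{o}--Voutier convergence statement requires $y\ge 4\sqrt{(m^2+16)/(2m-1)}\approx 2.8\sqrt m$, and this threshold cannot be dropped. Your estimate in (1) is off by a factor of $m$. For the two roots you reduce to (near $1-2/m$ and $-1/m$, with $|x|\le y$), the other three roots contribute a product $\asymp m$, not $m^2$. So $|x/y-\rho|\asymp |c|/(m y^4)$, which for $|c|$ as large as $m^2+16$ is only $\lesssim m/y^4$. This beats $1/(2y^2)$ only when $y\gg\sqrt m$.

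Below that range, $x/y$ need not be a convergent or an intermediate fraction of anything. For example, $(x,y)=(1,4)$ is coprime, of mixed parity, and satisfies $|F_m(1,4)|=60m+161\le m^2+16$ for $m\ge 63$. Yet $1/4$ is neither a convergent nor an intermediate fraction of any root of $f_m$. So your claim that ``small denominators below the threshold are handled by continued-fraction analysis'' does not work.

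Your step (2) does not rescue it. The norm identity $(m^2+16)(x^2+y^2)^4=c^2(n^2+16)$ by itself only yields $x^2+y^2\ge\bigl((n^2+16)/(m^2+16)\bigr)^{1/4}$, which is useless when $n$ is close to $m$. Extracting $|y|\gg m$ from the continued fraction is circular, because the convergent property presupposes that $y$ is already large.

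The missing idea is to feed the ordering $m<n$ into the norm identity: it gives $|c|<S:=(x^2+y^2)^2$. Normalize $y>0$, $0<|x|<y$, and write $c=R-mP$ with $R=x^4-6x^2y^2+y^4$ and $P=xy(x^2-y^2)$. Then use $R+S=2(y^2-x^2)^2$ and $S-R=8x^2y^2$.
\begin{itemize}
\item For $x<0$, the inequality $c>-S$ gives $y>m|x|/2$.
\item For $x>0$, the inequality $c<S$ gives $m<8r/(1-r^2)<4y$ with $r=x/y$.
\end{itemize}
Hence $y>m/4$, which is above the threshold once $m>1000$. A small pair like $(1,4)$ has $|c|>S$ and can only occur with the other parameter smaller.

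Even so, $y>m/4$ alone does not contradict $y^{5/4}<\tfrac{10}{9}(m^2+16)$. The contradiction comes from the jump in convergent denominators.
\begin{itemize}
\item The convergents with denominator at most $m+1$ are $(u-1)/u$, $u/(u+1)$, $(2u-1)/(2u+1)$ for $m=2u$; $(u-1)/u$ for $m=2u-1$; and $-1/m$. These are excluded by computing $|F_m|>m^2+16$ directly.
\item Every later convergent has $y\ge m^2/11$, because the next partial quotient is about $m/10$, at least $m/3$, or $\lfloor m/5\rfloor$ respectively. This is incompatible with $y^{5/4}\ll m^2$.
\end{itemize}
You should also dispose of the reducible parameter $3$ before invoking Hoshi's correspondence.
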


All results from earlier papers that are used in the proof are stated in Section~\ref{sec:literature}.  Every subsequent argument refers only to those statements and to results proved in the present paper.  This has two advantages: the hypotheses used from the literature are explicit, and notation specific to the cited papers does not enter the later proof.

\section{Results used from the literature}\label{sec:literature}

This section contains no proofs.  The statements are reformulated using the notation of the present paper and only the consequences needed below.

\subsection{Results of Gras and Hoshi}

\begin{theorem}[Gras; Hoshi]\label{thm:hoshi-basic}
For every positive integer $n\neq3$, the polynomial $f_n$ is irreducible over $\Q$, and $K_n/\Q$ is a cyclic quartic extension.  Moreover
\[
 f_3(X)=(X^2+X-1)(X^2-4X-1),
 \qquad
 K_3=\Q(\sqrt5).
\]
Consequently, for every positive integer $m$,
\[
 K_m=K_3\quad\Longrightarrow\quad m=3.
\]
\end{theorem}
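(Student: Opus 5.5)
This is a cited result, so the plan is a self-contained verification of each of its three claims.

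\emph{The factorization of $f_3$.} Expanding
\[
(X^2+X-1)(X^2-4X-1)=X^4-3X^3-6X^2+3X+1=f_3(X)
\]
confirms the displayed identity. Both quadratic factors have discriminant $20$, so every root lies in $\Q(\sqrt5)$. Hence $K_3=\Q(\sqrt5)$, whichever root $\rho_3$ is chosen.

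\emph{Irreducibility and the cyclic Galois group for $n\neq3$.} I would use the M\"obius symmetry of the family. Put $\sigma(x)=\frac{x-1}{x+1}$, which has order $4$ in $\mathrm{PGL}_2(\Q)$. A direct check gives
\[
(X+1)^4 f_n(\sigma(X)) = c\,f_n(X)
\]
for a nonzero constant $c$, so $\sigma$ permutes the roots of $f_n$. Its only fixed points are $\pm\ii$, which are not roots, so $\sigma$ acts on the four roots as a $4$-cycle. In particular every root is a rational function of any other.

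Now suppose $f_n$ is reducible. A rational root is impossible: it would be $\pm1$, and $f_n(\pm1)=-4\neq0$. So $f_n$ would factor into two rational quadratics, and since the roots are real and $\sigma$ is defined over $\Q$, $\sigma$ must either preserve each factor or swap them.
\begin{itemize}
\item If $\sigma$ swaps the factors, then $\sigma^2(x)=-1/x$ preserves each factor. Each factor then has the shape $X^2-aX-1$, and comparing coefficients with $f_n$ forces $n^2+16$ to be a square. This gives only $n=3$ among positive integers.
\item If $\sigma$ preserves a factor, then $\sigma$ would act on its two roots with order dividing $2$, contradicting the fact that $\sigma$ acts as a $4$-cycle.
\end{itemize}
So $f_n$ is irreducible for $n\neq3$. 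Since all roots lie in $\Q(\rho_n)$ and are permuted cyclically by $\sigma$, the extension $K_n/\Q$ is Galois with cyclic group $\langle\sigma\rangle$ of order $4$. Reality follows from the discriminant, or from sign changes of $f_n$ at $-\infty,-1,0,1,\infty$.

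\emph{The consequence.} For $m\neq3$ the first part gives $[K_m:\Q]=4$, whereas $[K_3:\Q]=2$. So $K_m=K_3$ forces $m=3$.

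The only step requiring care is the case analysis for a quadratic factorization. The remaining checks are routine polynomial identities.
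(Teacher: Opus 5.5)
Your proof is correct, but it takes a different route from the paper, which gives no proof of this statement. The literature section is proof-free by design: irreducibility and cyclicity for $n\neq3$ are quoted from Gras's Proposition~6, and the factorization of $f_3$ and the final consequence are quoted from Hoshi's Lemma~4.1 and Section~7. You instead derive all three claims from the order-$4$ M\"obius symmetry $\sigma(x)=(x-1)/(x+1)$. Indeed $F_n(X-1,X+1)=-4f_n(X)$, so your constant is $c=-4$. Your argument makes this input self-contained at the cost of about a paragraph. It also shows why $3$ is exceptional: a rational quadratic factorization forces $n^2+16$ to be a square. The citation keeps the paper short and makes explicit which facts are imported.

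A few steps should be tightened.

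The factor $X^2+X-1$ has discriminant $5$, not $20$. This is harmless, but you should also note that the roots are irrational before concluding $K_3=\Q(\sqrt5)$.

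The $4$-cycle claim has a small logical gap. Fixed-point-freeness of $\sigma$ alone does not give a $4$-cycle, since a fixed-point-free permutation of four letters with order dividing $4$ may be a double transposition. You also need that $\sigma^2(x)=-1/x$ fixes no root, which holds because its fixed points are again $\pm\ii$. You further need the four roots to be distinct. Your sign-change count (one root in each of $(-\infty,-1)$, $(-1,0)$, $(0,1)$, $(1,\infty)$) supplies this, so that count should come first.

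The reason $\sigma$ maps the root pair of a rational quadratic factor onto the root pair of a factor is that $\sigma$ has rational coefficients and so commutes with Galois conjugation. Then $\sigma(\beta_1),\sigma(\beta_2)$ are conjugate irrational roots of $f_n$. Reality of the roots plays no role in this step.

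A positive discriminant alone does not make a quartic's roots real, since four non-real roots are also possible. Rely on the sign changes for reality.

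Finally, for cyclicity, say explicitly that $\rho\mapsto\sigma(\rho)$ extends to an automorphism $\tau$ of $K_n$ with $\tau(\sigma^k(\rho))=\sigma^{k+1}(\rho)$, so $\tau$ has order $4$.
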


\noindent
The irreducibility and cyclicity assertion for $n\neq3$ is Gras's
Proposition~6 \cite[p.~16]{Gras1978}.  The factorization at $n=3$ and the
final consequence are Hoshi's Lemma~4.1 together with the discussion of the
reducible case in Section~7 \cite[Lemma~4.1 and Section~7]{Hoshi}.

\begin{theorem}[Hoshi]\label{thm:hoshi-correspondence}
Let $a,b$ be positive integers satisfying
\[
 0<a<b,
 \qquad a\neq3,
 \qquad b\neq3,
\]
and suppose that $K_a=K_b$.  Then there exist integers $x,y,c$ and $N$ such that
\begin{align}
 \gcd(x,y)&=1,
 &x&\not\equiv y\pmod2,
 &xy(x+y)(x-y)&\neq0,\label{eq:hoshi-xy}\\
 c&\mid a^2+16,
 &c&\text{ is odd},
 &F_a(x,y)&=c,\label{eq:hoshi-c}\\
 N&\in\{b,-b\}.\label{eq:hoshi-Nsign}
\end{align}
and
\begin{equation}\label{eq:hoshi-N}
 N=a+\frac{(a^2+16)xy(x+y)(x-y)}{c}.
\end{equation}
\end{theorem}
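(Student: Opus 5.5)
The plan is to derive the correspondence from the Galois action on the roots of $f_a$, which is given by a Möbius transformation of order $4$.

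\textbf{Step 1: the Möbius action.} Put $M=\begin{pmatrix}1&-1\\1&1\end{pmatrix}$. A direct check shows that if $\rho$ is a root of $f_t$, then so is $M\cdot\rho=(\rho-1)/(\rho+1)$. The four roots are therefore
\[
 \rho,\quad \frac{\rho-1}{\rho+1},\quad -\frac1\rho,\quad \frac{1+\rho}{1-\rho}.
\]
By Theorem~\ref{thm:hoshi-basic}, for $t\neq3$ the field $K_t$ is cyclic quartic. So there is a generator $\sigma$ of $\mathrm{Gal}(K_t/\Q)$ with $\sigma\rho_t=M\cdot\rho_t$. Replacing $\sigma$ by $\sigma^{-1}$ corresponds to replacing $M$ by $M^{-1}$. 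Up to replacing $t$ by $-t$ (i.e.\ $\rho\mapsto-\rho$ or $\rho\mapsto 1/\rho$), this is the source of the sign ambiguity $N\in\{b,-b\}$ in \eqref{eq:hoshi-Nsign}.

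\textbf{Step 2: parametrising the generators.} Suppose $K:=K_a=K_b$, and fix $\sigma$ with $\sigma\rho_a=M\rho_a$. After possibly changing the sign of $b$, we also have $\sigma\rho_b=M\rho_b$. I would prove that the set
\[
 \{\theta\in K\setminus\Q:\ \sigma\theta=M\theta\}
\]
is a single orbit under the centraliser of $M$ in $\mathrm{PGL}_2(\Q)$. That centraliser is the torus $T=\{\begin{pmatrix}x&-y\\y&x\end{pmatrix}\}$. This is a twisted-form / Hilbert~90 argument: $\theta$ corresponds to a $\sigma$-equivariant embedding of $\mathbf P^1$, and two such embeddings differ by an element of $\mathrm{PGL}_2(K)$ commuting with the twisted action. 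That element descends to $T(\Q)$, essentially because $H^1$ of the norm torus attached to $\Q(\ii)$ splits suitably over the cyclic extension. I expect this step to be the conceptual obstacle. As a concrete fallback, I would write $\rho_b$ as a $\Q$-linear combination of $1,\rho_a,\rho_a^2,\rho_a^3$ and impose $\sigma\rho_b=M\rho_b$ directly. Either way the outcome is
\[
 \rho_b=\frac{x\rho_a-y}{y\rho_a+x}
\]
with coprime integers $x,y$.

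\textbf{Step 3: the formula for $N$.} Summing the conjugates gives
\[
 N=\sum_{i}\frac{x\rho_i-y}{y\rho_i+x}.
\]
This is a symmetric rational function of the roots with denominator $\prod_i(y\rho_i+x)=F_a(x,y)$ up to sign. Expanding and simplifying, I expect to obtain
\[
 N=a+\frac{(a^2+16)\,xy(x+y)(x-y)}{F_a(x,y)}.
\]
Set $c=F_a(x,y)$. Nondegeneracy, $xy(x^2-y^2)\neq0$, follows because otherwise $N=a$ up to the trivial symmetries; this uses $a<b$ and $N=\pm b$.

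\textbf{Step 4: arithmetic conditions.} Since $N\in\Z$, we need $c\mid(a^2+16)\,xy(x^2-y^2)$. Resultant computations show that $\gcd\bigl(F_a(x,y),\,xy(x^2-y^2)\bigr)$ is supported on $2$, because $F_a(x,0)=x^4$, $F_a(0,y)=y^4$ and $F_a(x,\pm x)=-4x^4$. Hence, for $x,y$ coprime, the odd part of $c$ divides $a^2+16$. A $2$-adic analysis then finishes the argument. If $x\equiv y\pmod 2$, I would replace $(x,y)$ by $\bigl((x+y)/2,(x-y)/2\bigr)$; this is the action of $M\in T$, which leaves $N$ unchanged up to sign. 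That gives $x\not\equiv y\pmod2$, and then $F_a(x,y)$ is odd.
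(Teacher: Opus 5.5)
The paper does not prove this statement. It is one of the imported results of Section~\ref{sec:literature}, taken from Hoshi's field-isomorphism criterion (his Theorems~1.1 and~1.4) together with his parity Lemma~6.1. You instead derive it directly: the order-four M\"obius action, the claim that all $\theta\in K$ with $\sigma\theta=M\theta$ form one orbit of the torus $T(\Q)$, a trace computation, and an elementary gcd and $2$-adic cleanup. That route is sound and self-contained. Its cost is that the criterion the paper imports, which is essentially your Steps~2 and~3, must now be proved, and as written Step~2 is only asserted.

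Your cohomological justification is vaguer than necessary, since the only input needed is Hilbert~90 for $\Q(\ii)/\Q$. The ``concrete fallback'' should be dropped: imposing $\sigma\rho_b=M\rho_b$ on the coordinates of $\rho_b$ in the basis $1,\rho_a,\rho_a^2,\rho_a^3$ gives quadratic equations with no evident path to the M\"obius form. A complete argument is short.
\begin{itemize}
\item Extend $\sigma$ to $K(\ii)$ fixing $\ii$, and put $z(\theta)=(\theta-\ii)/(\theta+\ii)$.
\item Then $z(M\theta)=-\ii\,z(\theta)$, and $z(g\theta)=\frac{x-\ii y}{x+\ii y}\,z(\theta)$ for $g=\begin{pmatrix}x&-y\\ y&x\end{pmatrix}$.
\item Choose $\theta_b\in\{\rho_b,-\rho_b\}$ with $\sigma\theta_b=M\theta_b$. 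The ratio $\lambda=z(\theta_b)/z(\rho_a)$ is $\sigma$-invariant, so $\lambda\in\Q(\ii)$.
\item Complex conjugation fixes $K$ and inverts $z(\theta)$, so $\lambda\bar\lambda=1$.
\item Hilbert~90 gives $\lambda=(x-\ii y)/(x+\ii y)$ with coprime integers $x,y$. Injectivity of $z$ then yields $\theta_b=g\rho_a$.
\end{itemize}

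There are also two sign slips to repair.

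First, $\prod_i(y\rho_i+x)=y^4f_a(-x/y)=F_a(-x,y)$, not $\pm F_a(x,y)$. With your parametrization the trace is $a-(a^2+16)P/F_a(-x,y)$, where $P=xy(x^2-y^2)$, so the displayed formula holds for the pair $(-x,y)$. This is harmless, since $x\mapsto-x$ preserves all the other conditions.

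Second, in Step~4 the replacement induced by $\rho_a\mapsto M^{\pm1}\rho_a$ is $\bigl((x+y)/2,(y-x)/2\bigr)$ or $\bigl((x-y)/2,(x+y)/2\bigr)$, not $\bigl((x+y)/2,(x-y)/2\bigr)$. For example, take $a=2$ and the pair $(-1,3)$, which gives $F_2(-1,3)=-20$ and $N=-22$ in the stated formula.
\begin{itemize}
\item Your substitution turns it into $(1,-2)$, with $F_2(1,-2)=-19$ and $N\notin\Z$.
\item The correct one gives $(1,2)$, with $F_2(1,2)=5$ and again $N=-22$.
\end{itemize}
Because only the chosen root of $f_a$ changes, $N$ is preserved exactly, not merely up to sign. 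With these corrections your argument proves the statement.
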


\noindent
This is the fixed-parameter consequence of Hoshi's field-isomorphism criterion and quartic-equation correspondence, with the parity refinement of Lemma~6.1 \cite[Theorems~1.1 and~1.4, Lemma~6.1]{Hoshi}.  The conditions in \eqref{eq:hoshi-xy} spell out explicitly what is needed from Hoshi's terminology.

\begin{theorem}[Hoshi]\label{thm:hoshi-finite}
Let $a,b$ be positive integers satisfying
\[
 0<a<b,
 \qquad a\le1000,
 \qquad a\neq3,
 \qquad b\neq3.
\]
If $K_a=K_b$, then
\[
 (a,b)\in\{(1,103),(2,22),(4,956)\}.
\]
\end{theorem}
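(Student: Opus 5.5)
Since this is a finite-range statement, I would reduce it via Theorem~\ref{thm:hoshi-correspondence} to finitely many quartic Thue equations and solve each one. Fix $a$ with $1\le a\le 1000$, $a\neq3$, and suppose $K_a=K_b$ with $b>a$, $b\neq3$. Theorem~\ref{thm:hoshi-correspondence} then produces coprime integers $x,y$ of opposite parity with $xy(x+y)(x-y)\neq0$ and an odd divisor $c$ of $a^2+16$ such that
\[
F_a(x,y)=c,
\qquad
\pm b = a+\frac{(a^2+16)\,xy(x+y)(x-y)}{c}.
\]
So $b$ is determined by a solution of one of finitely many Thue equations $F_a(X,Y)=c$. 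Here $a$ runs over $999$ values and $c$ over the odd positive divisors of $a^2+16$; negative $c$ cannot occur when $c\mid a^2+16$ is taken positive, and a sign can be absorbed if needed. Since $F_a$ is an irreducible binary quartic form by Theorem~\ref{thm:hoshi-basic}, each such equation has finitely many solutions.

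The key steps, in order, are as follows.
\begin{enumerate}
\item For each $a$, factor $a^2+16$ and list its odd divisors $c$.
\item Exploit the symmetries of $F_a$ to cut the search. The form is invariant under $(x,y)\mapsto(-y,x)$ and under the order-$4$ substitution $(x,y)\mapsto(x+y,\,y-x)$, which scales the form by $4$. It therefore suffices to find solutions up to this action and up to sign.
\item Solve $F_a(x,y)=c$ completely. The standard route is a Baker--Davenport reduction, as in a Thue solver such as Bilu--Hanrot or the PARI/Magma implementations, using the explicitly known units of the cyclic quartic field $K_a$.
\item For every admissible solution, meaning coprime, of opposite parity and with $xy(x^2-y^2)\neq0$, compute $N$ and record $b=|N|$ whenever $b>a$ and $b\neq3$.
\item Finally, check that the only surviving pairs are $(1,103)$, $(2,22)$ and $(4,956)$. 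As a consistency check, verify directly that these three coincidences genuinely occur, for instance by confirming that $f_b$ has a root in $K_a$.
\end{enumerate}

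I expect the main obstacle to be the size of the computation in step 3: roughly a thousand cyclic quartic fields, each with possibly many divisors $c$. To make this feasible I would first treat small $|y|$ directly. A solution with $|y|$ large must come from a very good rational approximation $x/y$ to a root $\rho$ of $f_a$. The roots are approximately $a,\,-1/a,\,\pm1$ up to corrections of order $1/a$, and Thue--Siegel type bounds for this family (in the spirit of Lettl--Peth\H{o}--Voutier) give an explicit upper bound for $|y|$ whenever $c\le a^2+16$. The remaining candidates can then be found by continued-fraction expansion of the four real roots, keeping only convergents whose denominators lie below that bound. This replaces a general Thue solver by a uniform, checkable search over convergents, and the enumeration itself is routine.
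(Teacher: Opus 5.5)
The paper gives no argument for this statement. It is quoted from Hoshi's finite-range classification (his Theorem~8.1 and Tables~2--3). Your plan, reducing via Theorem~\ref{thm:hoshi-correspondence} to finitely many Thue equations $F_a(x,y)=c$ and solving them, is the natural way to reconstruct such a result.

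As written, however, the reduction has a genuine hole. You let $c$ run only over the \emph{positive} odd divisors of $a^2+16$, claiming that negative $c$ cannot occur or that its sign can be absorbed. Both claims are false, and the restriction already loses one of the three known coincidences. For $a=1$ one has $F_1(1,2)=1-2-24+8+16=-1$, so $c=-1$, $xy(x+y)(x-y)=-6$, and $N=1+17\cdot(-6)/(-1)=103$; this is $K_1=K_{103}$. No choice with $c>0$ yields it. Taking $c=1$ would force $xy(x^2-y^2)=6$, i.e.\ $(x,y)=(2,1)$ up to $(x,y)\mapsto(-y,x)$, where $F_1=-13$. Taking $c=\pm17$ would force $|xy(x^2-y^2)|\in\{102,104\}$, which has no integer solutions. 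So your search would never find $(1,103)$. More importantly for the `only if' direction, it leaves every equation $F_a(x,y)=c$ with $c<0$ unexamined, so it does not prove the statement.

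The symmetry you cite does not repair this. The map $(x,y)\mapsto(x+y,y-x)$ is $x+\ii y\mapsto(1-\ii)(x+\ii y)$. Since $F_a(x,y)=\tfrac14\operatorname{Re}\bigl((4+\ii a)(x+\ii y)^4\bigr)$ and $(1-\ii)^4=-4$, it gives $F_a(x+y,y-x)=-4F_a(x,y)$ (for instance $F_1(1,-1)=-4$), not $4F_a(x,y)$. It also sends a coprime pair of opposite parity to a pair of odd integers, so it leaves the family of equations supplied by Theorem~\ref{thm:hoshi-correspondence}.

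Nor can a unimodular change of variables flip the sign. The only substitutions in $\mathrm{GL}_2(\Z)$ taking $F_a$ to $\pm F_a$ are $\pm$ the identity and $\pm$ the map $(x,y)\mapsto(-y,x)$, and all of these fix $F_a$. Indeed, such a substitution permutes the roots of $f_a$ compatibly with the Galois action, hence acts as a power of $\rho\mapsto(\rho-1)/(\rho+1)$, and the odd powers are not unimodular. Swapping $x$ and $y$ merely replaces $F_a$ by $F_{-a}$. Hence the sign of $c$ can only be traded for a factor $4$ together with a change of parity class. That is the role of the divisors of $4(a^2+16)$ in Hoshi's original formulation, before the parity refinement.

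The fix is to let $c$ range over the odd divisors of $a^2+16$ of both signs. With that correction your plan is a sound, if heavy, reconstruction of the computation the paper cites, provided you use a general Thue solver for the smaller values of $a$. Theorem~\ref{thm:lpv-specialized} is only stated for $t\ge1001$, and the underlying Lettl--Peth\H{o}--Voutier estimates need not apply for small $t$.
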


\noindent
This is the consequence of Hoshi's Theorem~8.1 and Tables~2--3 obtained by retaining the entries for which the second positive parameter is larger than the first \cite[Theorem~8.1 and Tables~2--3]{Hoshi}.

\begin{theorem}[Hoshi]\label{thm:hoshi-examples}
The following equalities hold:
\[
 K_1=K_{103},
 \qquad
 K_2=K_{22},
 \qquad
 K_4=K_{956}.
\]
\end{theorem}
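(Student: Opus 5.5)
The statement consists of three explicit equalities of number fields, so I will verify each one by a finite exact computation rather than through any general theory. By Theorem~\ref{thm:hoshi-basic}, for $a\in\{1,2,4\}$ and $b\in\{103,22,956\}$ the polynomials $f_a$ and $f_b$ are irreducible and $K_a,K_b$ are cyclic quartic. So $K_b\subseteq K_a$ already forces $K_a=K_b$, because both fields have degree $4$. It therefore suffices, for each pair $(a,b)$, to exhibit a polynomial $g\in\Q[X]$ of degree at most $3$ with
\[
 f_b\bigl(g(X)\bigr)\equiv 0 \pmod{f_a(X)}.
\]
Then $g(\rho_a)$ is a root of $f_b$ lying in $K_a$, which gives the inclusion.

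As a first consistency check, I will confirm that the quadratic subfields agree. Since $K_n$ is cyclic quartic, its unique quadratic subfield is $\Q(\sqrt{n^2+16})$. The relevant values are:
\begin{itemize}
 \item $103^2+16=10625=5^4\cdot17$, matching $1^2+16=17$;
 \item $22^2+16=500=5^2\cdot20$, matching $2^2+16=20$;
 \item $956^2+16=913952=13^4\cdot32$, matching $4^2+16=32$.
\end{itemize}
This is necessary but not sufficient, since distinct cyclic quartic fields can share a quadratic subfield. It also suggests where the coefficients of $g$ will have denominators: I expect them to involve the primes $5$ and $13$, or the odd divisors $c$ of $a^2+16$.

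To find $g$ I will use the correspondence of Theorem~\ref{thm:hoshi-correspondence} as a guide. I search for small coprime $(x,y)$ of opposite parity with $F_a(x,y)=c$ odd, $c\mid a^2+16$, such that $a+(a^2+16)xy(x+y)(x-y)/c=\pm b$. For example, for $(a,b)=(2,22)$ the target is $xy(x^2-y^2)\cdot 20/c=\pm20$ or $\mp24$. From such a pair the natural candidate is a root of $f_b$ of the form $(x\rho_a+y)/(-y\rho_a+x)$ or a closely related quotient. Independently of this heuristic, I can determine $g$ numerically. Order the four real roots $\rho_a^{(i)}$ of $f_a$ and the four roots $\rho_b^{(j)}$ of $f_b$ compatibly with the cyclic Galois action $\rho\mapsto(\rho-1)/(\rho+1)$, which preserves every $f_n$. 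Then solve the $4\times4$ Vandermonde system $\sum_k g_k(\rho_a^{(i)})^k=\rho_b^{(\pi(i))}$, trying the four cyclic shifts $\pi$, and round the coefficients to rationals with small denominators.

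The main obstacle is the final step: certifying that the guessed $g$ is exact. This is a routine but sizable reduction of $f_b(g(X))$ modulo $f_a(X)$ in exact rational arithmetic, with coefficients of size about $956^4$ in the worst case. I will carry it out symbolically and check that the remainder is identically zero. If the numerics fail to produce small rational coefficients for some shift, that shift is simply the wrong matching of roots and I try the next one. Failure for all four shifts would contradict the claimed equality, so the certification itself is decisive.
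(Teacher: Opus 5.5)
Your route differs from the paper's, which gives no argument for this statement and simply cites Hoshi's Table~2 and Gras's $t=22$ example. Your reduction is correct: since $f_a,f_b$ are irreducible of degree~$4$, one root of $f_b$ in $K_a$ suffices. Your quadratic-subfield checks are also right.

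However, the numerical step would fail as written. Put $\mu(X)=(X-1)/(X+1)$ and let $\sigma\in\mathrm{Gal}(K_a/\Q)$ satisfy $\sigma(\rho_a)=\mu(\rho_a)$. On the roots of $f_b$, $\sigma$ acts either by $\mu$ or by $\mu^{-1}$. In the second case Galois equivariance forces the matching $i\mapsto k-i$, not a cyclic shift. This second case actually occurs for $(2,22)$ and $(4,956)$. Write $\rho_n^{(j)}=\mu^j(\rho_n^{(0)})$ with $\rho_n^{(0)}$ the largest root. Then the map $X\mapsto(X+2)/(2X-1)$ sends $\rho_2^{(0)},\rho_2^{(1)},\rho_2^{(2)},\rho_2^{(3)}$ to $\rho_{22}^{(1)},\rho_{22}^{(0)},\rho_{22}^{(3)},\rho_{22}^{(2)}$. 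So none of your four shifts gives rational coefficients there, and your claim that failure of all four would refute the equality is false. You must allow all eight matchings $i\mapsto\pm i+k$, or interpolate against $f_{-b}(X)=f_b(-X)$, using $-\mu(-X)=\mu^{-1}(X)$.

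Your $(x,y)$ heuristic, on the other hand, yields an exact certificate with no large reduction. For real $z$, $f_t(z)$ is the real part of $(z+\ii)^4$ minus $t/4$ times its imaginary part. Hence $f_t(z)=0$ if and only if $(z+\ii)^4\in\mathbf R\,(t+4\ii)$. If $g=(x\rho_a+y)/(x-y\rho_a)$, then $g+\ii=(x-\ii y)(\rho_a+\ii)/(x-y\rho_a)$. So $g$ is a root of $f_N$ whenever $(a+4\ii)(x-\ii y)^4=c(N+4\ii)$ with $c\neq0$ real; this is \eqref{eq:gaussian} conjugated and multiplied by $\ii$. The identities
\[
 (4+\ii)(1+2\ii)^4=-(4+103\ii),\qquad
 (4+2\ii)(1+2\ii)^4=5(4-22\ii),\qquad
 (4+4\ii)(2+3\ii)^4=4-956\ii
\]
therefore show that $(\rho_1+2)/(1-2\rho_1)$, $(\rho_2+2)/(2\rho_2-1)$ and $(2\rho_4+3)/(3\rho_4-2)$ are roots of $f_{103}$, $f_{22}$ and $f_{956}$, respectively. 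For the last two, $N=-b$, and one uses $f_{-b}(X)=f_b(-X)$.

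Compared with the paper's bare citation, this makes the statement self-contained. It is really the easy converse of Theorem~\ref{thm:hoshi-correspondence}, which the paper never states.
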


\noindent
The middle equality already appears in Gras's example with parameter
$t=22$, expressed there through a generating relative unit corresponding to
$t=2$ \cite[p.~18]{Gras1978}.  All three equalities are recorded in Hoshi's
Table~2 and the accompanying discussion \cite[Table~2]{Hoshi}.

\subsection{A specialized result of Lettl--Peth\H{o}--Voutier}

For $t\ge6$, let $\alpha_t$ and $\gamma_t$ denote the real roots of $F_t(X,1)$ characterized by
\begin{align}
 1-\frac2t+\frac{2}{t^2}
 &<\alpha_t<
 1-\frac2t+\frac{3}{t^2},\label{eq:alpha-def}\\
 -\frac1t+\frac{4}{t^3}
 &<\gamma_t<
 -\frac1t+\frac{5}{t^3}.\label{eq:gamma-def}
\end{align}
The intervals in \eqref{eq:alpha-def}--\eqref{eq:gamma-def} contain exactly one root each; these inequalities are part of Lemma~9(c) of Lettl--Peth\H{o}--Voutier \cite[Lemma~9(c)]{LPV}.

\begin{theorem}[Lettl--Peth\H{o}--Voutier, specialized form]\label{thm:lpv-specialized}
Let $t\ge1001$, and let $x,y\in\Z$ satisfy
\begin{equation}\label{eq:lpv-hyp}
 \gcd(x,y)=1,
 \qquad y>0,
 \qquad |x|\le y,
 \qquad |F_t(x,y)|\le t^2+16,
\end{equation}
and
\begin{equation}\label{eq:lpv-size}
 4\sqrt{\frac{t^2+16}{2t-1}}\le y.
\end{equation}
Then $x/y$ is a convergent in the simple continued-fraction expansion of either $\alpha_t$ or $\gamma_t$.

The initial partial quotients needed below are as follows.

If $t=2u$ is even, then
\begin{equation}\label{eq:alpha-cf-even}
 \alpha_t=
 \left[0;1,u-1,1,1,
 \left\lfloor\frac{u-3}{5}\right\rfloor,\ldots\right].
\end{equation}
If $t=2u-1$ is odd, then
\begin{equation}\label{eq:alpha-cf-odd}
 \alpha_t=[0;1,u-1,A_3,\ldots],
 \qquad A_3\ge\frac t3.
\end{equation}
For every $t\ge1001$,
\begin{equation}\label{eq:gamma-cf}
 \gamma_t=
 \left[-1;1,t-1,
 \left\lfloor\frac t5\right\rfloor,\ldots\right].
\end{equation}

If $y>1$, the following estimates hold.  If $x/y$ is a convergent to $\alpha_t$, then
\begin{equation}\label{eq:lpv-alpha-simple}
 y^{5/4}<\frac{10}{9}(t^2+16).
\end{equation}
If $x/y$ is a convergent to $\gamma_t$, then
\begin{equation}\label{eq:lpv-gamma-simple}
 y^{5/4}<t^2+16.
\end{equation}
\end{theorem}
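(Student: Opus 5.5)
The plan is to derive each part of Theorem~\ref{thm:lpv-specialized} from the root-location and approximation results of Lettl--Peth\H{o}--Voutier, specialized to $t\ge1001$, and to do the elementary parts (Legendre criterion, continued-fraction digits) by hand. I would proceed in three steps.

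\emph{Step 1: the convergent property.} The roots of $F_t(X,1)$ form one orbit under $\rho\mapsto(\rho-1)/(\rho+1)$ together with $\rho\mapsto-1/\rho$. So they are $\alpha_t$, $\gamma_t$, $-1/\alpha_t\approx-1-2/t$ and $-1/\gamma_t\approx t$. Of these only $\alpha_t$ and $\gamma_t$ lie in $[-1,1]$, and since $|x|\le y$ we have $x/y\in[-1,1]$. Let $\beta$ be whichever of $\alpha_t,\gamma_t$ is closer to $x/y$, and write
\[
 |F_t(x,y)|=y^4\prod_{\rho}|x/y-\rho| .
\]
The key step is to bound the three factors with $\rho\ne\beta$ from below, using \eqref{eq:alpha-def}--\eqref{eq:gamma-def}:
\begin{itemize}
\item the factor for the other root in $[-1,1]$ is at least half their gap, roughly $1/2$;
\item the factor for $-1/\gamma_t$ is at least about $t-1$;
\item the factor for $-1/\alpha_t$ needs separate treatment, since $x/y$ may lie near $-1$ when $\beta=\gamma_t$. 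There I would use $x/y\ge-1$ to get a distance of order $1$ in the $\gamma_t$ case, and a distance $\ge 2-O(1/t)$ in the $\alpha_t$ case.
\end{itemize}
Multiplying these should give
\[
 \prod_{\rho\ne\beta}|x/y-\rho|\ge(2t-1)/8
\]
or a comparable bound. Then
\[
 |x/y-\beta|\le \frac{8(t^2+16)}{(2t-1)y^4}<\frac1{2y^2},
\]
where the last inequality is exactly \eqref{eq:lpv-size}. Legendre's theorem then makes $x/y$ a convergent of $\beta$. I expect this step to be the main obstacle: matching the constants $4$ and $2t-1$ exactly requires careful case analysis, and the case where $x/y$ lies near $-1$ is the delicate one.

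\emph{Step 2: the continued-fraction digits.} I would compute \eqref{eq:alpha-cf-even}--\eqref{eq:gamma-cf} directly from the root intervals, refined where needed by one more term of the $1/t$-expansion of the root, obtained from $F_t(\alpha,1)=0$ by Newton iteration. For example, $1/\alpha_t=1+2/t+O(t^{-2})$, so after $[0;1,\dots]$ the next complete quotient is $\approx t/2$. Its floor is $u-1$ in both parities, and tracking the remainder gives the subsequent quotients $1,1,\lfloor(u-3)/5\rfloor$ when $t=2u$, and a quotient $\ge t/3$ when $t=2u-1$. For $\gamma_t=-1/t+O(t^{-3})$, write $\gamma_t=-1+(1-1/t+\dots)$. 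This gives quotients $1$ and then $t-1$, and the $4/t^3$--$5/t^3$ window produces $\lfloor t/5\rfloor$. Each digit requires checking that the relevant complete quotient lies strictly between consecutive integers for all $t\ge1001$, which the explicit error terms allow.

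\emph{Step 3: the bounds \eqref{eq:lpv-alpha-simple}--\eqref{eq:lpv-gamma-simple}.} I would take LPV's effective irrationality measures for $\alpha_t$ and $\gamma_t$, which come from the hypergeometric method and have the form $|\beta-x/y|>1/(C(t)\,y^{\lambda(t)})$ with $\lambda(t)$ only slightly larger than $2$ for large $t$. Combining this with the upper bound $|x/y-\beta|\le 8(t^2+16)/((2t-1)y^4)$ from Step 1 yields $y^{4-\lambda}<C'(t)(t^2+16)$. For $t\ge1001$ one checks that $4-\lambda\ge5/4$ and that the constants can be absorbed. This gives $10/9$ for $\alpha_t$ and $1$ for $\gamma_t$, where the better constant for $\gamma_t$ comes from the larger product of the other root distances, of order $t$. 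The hypothesis $y>1$ is used only to pass from $y^{4-\lambda}$ to $y^{5/4}$.
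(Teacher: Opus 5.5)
Your overall architecture is the natural one: Legendre's criterion via a product-of-root-distances estimate, explicit expansions for the partial quotients, and an irrationality measure for the size bounds. But Step~1 rests on a false estimate, and it fails exactly in the case you flagged.

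\textbf{The error in Step~1.} In the $\gamma_t$ case you say that $x/y\ge-1$ keeps $x/y$ at distance of order $1$ from $-1/\alpha_t$. It does not. Since $-1/\alpha_t=-1-2/t+O(t^{-2})$, the condition $x/y\ge-1$ only gives $x/y+1/\alpha_t\ge 1/\alpha_t-1>2/t-3/t^2$. At $x/y=-1$ the three factors other than the one for $\gamma_t$ are roughly $2$, $t+1$ and $2/t$, with product about $4$. You need $(2t-1)/8>250$, so the claimed bound $\prod_{\rho\ne\gamma_t}|x/y-\rho|\ge(2t-1)/8$ fails for all $x/y$ below about $-0.87$. (In the $\alpha_t$ case the distance to $-1/\alpha_t$ is also only about $3/2$ at the midpoint, not $2-O(1/t)$, but there the product bound still holds.)

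\textbf{The repair.} Do not apply Legendre near $-1$; use \eqref{eq:lpv-size} to show this region contains no solutions. For $-1\le x/y\le-1/2$, keep all four factors:
$|x/y-\gamma_t|>0.49$, $\alpha_t-x/y>1.4$, $-1/\gamma_t-x/y>t$, and $x/y+1/\alpha_t>2/t-3/t^2$.
These give $|F_t(x,y)|>1.3\,y^4\ge 1.3\cdot256\,(t^2+16)^2/(2t-1)^2>t^2+16$, a contradiction. On $-1/2\le x/y\le(\alpha_t+\gamma_t)/2$ the three-factor product is at least $0.74\,(t-\tfrac12)>(2t-1)/8$, so your Legendre argument goes through there.

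\textbf{Step~3 and the paper.} The paper gives no proof of this statement. Section~\ref{sec:literature} imports the root intervals and partial quotients from Lemma~9(c),(d) of Lettl--Peth\H{o}--Voutier, and the convergent property and size bounds from their Theorem~2(a),(b). The two displayed bounds are asserted to be weaker numerical consequences for $t\ge1001$. Your Step~3 does not remove this dependence. Everything there rests on LPV's explicit $C(t)$ and $\lambda(t)$, which you do not state, so $\lambda(t)\le 11/4$ and the constants $10/9$ and $1$ are asserted, not checked.

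Your heuristic for why $\gamma_t$ gets the better constant is also backwards. Near the roots, the three-factor product is $|f_t'(\alpha_t)|\approx 2t$ versus $|f_t'(\gamma_t)|\approx t$. So Step~1 gives the stronger approximation bound for $\alpha_t$, and the weaker constant $10/9$ must come from LPV's irrationality measure for $\alpha_t$.

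\textbf{A smaller point in Step~2.} The window \eqref{eq:gamma-def} does not by itself produce $\lfloor t/5\rfloor$. Writing $\gamma_t=-1/t+\epsilon/t^3$ with $4<\epsilon<5$, the relevant complete quotient is $t/\epsilon-1/t$, which is only known to lie in $(t/5-1/t,\,t/4-1/t)$.
\begin{itemize}
\item For $\gamma_t$ you need $\gamma_t=-1/t+5/t^3-46/t^5+O(t^{-7})$. This gives the complete quotient $t/5+21/(25t)+\cdots$, and the distinction matters when $5\mid t$.
\item For the quotient $\lfloor(u-3)/5\rfloor$ of $\alpha_{2u}$ you need the analogous refinement $-1/\gamma_t=t+5/t+O(t^{-3})$.
\end{itemize}
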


\noindent
The continued fractions are Lemma~9(d) of Lettl--Peth\H{o}--Voutier.  The convergence assertion and their quantitative estimates are Theorem~2(a)--(b) of that paper \cite[Lemma~9(d), Theorem~2]{LPV}.  The bounds \eqref{eq:lpv-alpha-simple}--\eqref{eq:lpv-gamma-simple} are the weaker numerical consequences of Theorem~2(b) for $t\ge1001$ that are used in this paper; in particular, the auxiliary function appearing in the original formulation is not needed here.

\section{An identity attached to an equality of fields}\label{sec:identity}

We now begin the proof of Theorem~\ref{thm:main}.  The parameter $3$ has already been separated by Theorem~\ref{thm:hoshi-basic}, so from this point onward we apply Theorem~\ref{thm:hoshi-correspondence} only under its stated hypotheses.

\begin{proposition}\label{prop:gaussian}
Let $0<a<b$ be positive integers with $a,b\neq3$, and suppose that $K_a=K_b$.  Then there exist integers $x,y,c,N$ satisfying \eqref{eq:hoshi-xy}--\eqref{eq:hoshi-N}.  They may be chosen so that
\begin{equation}\label{eq:normalization}
 y>0,
 \qquad 0<|x|<y.
\end{equation}
For such a choice,
\begin{equation}\label{eq:normidentity}
 (a^2+16)(x^2+y^2)^4=c^2(b^2+16).
\end{equation}
\end{proposition}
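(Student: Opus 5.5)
The plan has three parts: get the integers from Theorem~\ref{thm:hoshi-correspondence}, normalize them using symmetries of $F_a$, and then derive \eqref{eq:normidentity} by writing everything in terms of the Gaussian integer $z=x+\ii y$.

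\textbf{Existence.} Since $0<a<b$ and $a,b\neq3$, Theorem~\ref{thm:hoshi-correspondence} applies directly. It gives integers $x,y,c,N$ satisfying \eqref{eq:hoshi-xy}--\eqref{eq:hoshi-N}.

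\textbf{Normalization.} I will show that the substitution $(x,y)\mapsto(y,-x)$, i.e.\ $z\mapsto -\ii z$, preserves every condition in \eqref{eq:hoshi-xy}--\eqref{eq:hoshi-N}.
\begin{itemize}
\item Expanding gives $F_a(y,-x)=y^4+axy^3-6x^2y^2-ax^3y+x^4=F_a(x,y)$, so $c$ is unchanged.
\item The product transforms as $y(-x)(y-x)(y+x)=xy(x+y)(x-y)$, so $N$ is unchanged by \eqref{eq:hoshi-N}.
\item Coprimality, opposite parity and $xy(x+y)(x-y)\neq0$ are clearly preserved.
\end{itemize}
We may therefore replace $z$ by any of $z,\ii z,-z,-\ii z$. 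The condition $xy(x+y)(x-y)\neq0$ says that $z$ lies on neither coordinate axis nor either diagonal. Hence exactly one of these four rotations lies in the open sector $\pi/4<\arg z<3\pi/4$, which means $y>0$ and $0<|x|<y$. This gives \eqref{eq:normalization}.

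\textbf{The identity.} Write $z^4=P+\ii Q$. Then
\[
P=x^4-6x^2y^2+y^4,\qquad Q=4xy(x+y)(x-y),
\]
so $F_a(x,y)=P-\tfrac a4 Q$, and hence $c=P-\tfrac a4Q$. Multiplying \eqref{eq:hoshi-N} by $c$ gives
\[
cN=ac+\tfrac{a^2+16}{4}Q=aP+4Q .
\]
Therefore
\[
c^2(N^2+16)=(aP+4Q)^2+16\bigl(P-\tfrac a4Q\bigr)^2=(a^2+16)(P^2+Q^2).
\]
The cross terms $\pm 8aPQ$ cancel in this expansion. Conceptually, this is the multiplicativity of the Gaussian norm applied to $(a+4\ii)(P-\ii Q)$, whose real part is $aP+4Q$ and whose imaginary part is $-4\bigl(P-\tfrac a4Q\bigr)$, i.e.\ $-4c$.

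Finally, $P^2+Q^2=|z|^8=(x^2+y^2)^4$, and $N^2=b^2$ by \eqref{eq:hoshi-Nsign}. Substituting these yields \eqref{eq:normidentity}.

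No step is a genuine obstacle. The point needing the most care is checking that the rotation $z\mapsto-\ii z$ leaves both $c$ and $N$ unchanged, rather than flipping the sign of either. The sign computations above settle this.
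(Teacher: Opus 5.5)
Your proposal is correct and follows the paper's proof. You invoke Hoshi's correspondence, normalize via the symmetry $(x,y)\mapsto(y,-x)$ (iterating it also gives the paper's second symmetry $(x,y)\mapsto(-x,-y)$), and your direct expansion is just the norm of the paper's identity $(4+\ii a)(x+\ii y)^4=c(4+\ii N)$. One harmless nit: in your conceptual remark, the imaginary part of $(a+4\ii)(P-\ii Q)$ is $4P-aQ=+4c$, not $-4c$, which does not affect the norm computation.
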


\begin{proof}
Take $x,y,c,N$ from Theorem~\ref{thm:hoshi-correspondence}.  The identities
\[
 F_a(y,-x)=F_a(x,y),
 \qquad
 F_a(-x,-y)=F_a(x,y)
\]
allow us to arrange \eqref{eq:normalization}.  These transformations preserve $\gcd(x,y)$ and also preserve
\[
 xy(x+y)(x-y)=xy(x^2-y^2),
\]
so the value of $N$ in \eqref{eq:hoshi-N} is unchanged.

Put
\[
 P=xy(x^2-y^2),
 \qquad
 R=x^4-6x^2y^2+y^4.
\]
Then
\[
 c=R-aP.
\]
Using \eqref{eq:hoshi-N},
\[
 N=a+\frac{(a^2+16)P}{c}=\frac{aR+16P}{c}.
\]
Since
\[
 (x+\ii y)^4=R+4\ii P,
\]
we obtain
\begin{equation}\label{eq:gaussian}
 (4+\ii a)(x+\ii y)^4=c(4+\ii N).
\end{equation}
Taking complex norms and using $N^2=b^2$ gives \eqref{eq:normidentity}.
\end{proof}

\section{Parameters at most \texorpdfstring{$1000$}{1000}}

\begin{proposition}\label{prop:small}
Let $0<a<b$ be positive integers with $a\le1000$.  If $K_a=K_b$, then
\[
 (a,b)\in\{(1,103),(2,22),(4,956)\}.
\]
\end{proposition}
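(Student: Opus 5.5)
The plan is to reduce Proposition~\ref{prop:small} to Theorem~\ref{thm:hoshi-finite}. That theorem already gives the conclusion under the extra hypotheses $a\neq3$ and $b\neq3$, so the only work is to remove the exceptional parameter $3$ using Theorem~\ref{thm:hoshi-basic}.

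Let $0<a<b$ with $a\le1000$ and $K_a=K_b$. We split into two cases.

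\emph{Case $3\in\{a,b\}$.} Suppose $a=3$. Then $K_b=K_3$, and the final assertion of Theorem~\ref{thm:hoshi-basic} (applied with $m=b$) forces $b=3$. This contradicts $a<b$. Suppose instead $b=3$. Then $K_a=K_3$, and the same assertion gives $a=3=b$, again a contradiction. So this case cannot occur.

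\emph{Case $a\neq3$ and $b\neq3$.} All hypotheses of Theorem~\ref{thm:hoshi-finite} now hold: $0<a<b$, $a\le1000$, $a\neq3$ and $b\neq3$. That theorem yields directly
\[
 (a,b)\in\{(1,103),(2,22),(4,956)\}.
\]

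No real obstacle is expected. The only care needed is the orientation in the case $b=3$: we must apply Theorem~\ref{thm:hoshi-basic} with the roles chosen so that its implication $K_m=K_3\Rightarrow m=3$ is invoked with $m=a$. Since $K_a=K_b$ is symmetric, this is immediate. Note also that the proposition only asserts necessity; the converse equalities are supplied separately by Theorem~\ref{thm:hoshi-examples} and are not needed here.
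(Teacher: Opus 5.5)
Your proposal is correct and matches the paper's proof: you rule out $3\in\{a,b\}$ with the final assertion of Theorem~\ref{thm:hoshi-basic}, which contradicts $a<b$, and then apply Theorem~\ref{thm:hoshi-finite} directly. The paper does the same, only more tersely.
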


\begin{proof}
If $a=3$ or $b=3$, Theorem~\ref{thm:hoshi-basic} contradicts $a<b$.  Hence $a,b\neq3$, and the conclusion is exactly Theorem~\ref{thm:hoshi-finite}.
\end{proof}

\section{Exclusion of \texorpdfstring{$a>1000$}{a > 1000}}\label{sec:large}

\begin{proposition}\label{prop:large}
Let $a,b$ be positive integers with
\[
 1000<a<b.
\]
Then $K_a\neq K_b$.
\end{proposition}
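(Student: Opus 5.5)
The plan is to argue by contradiction, feeding the data of Proposition~\ref{prop:gaussian} into Theorem~\ref{thm:lpv-specialized}. Suppose $1000<a<b$ and $K_a=K_b$. Then $a,b\neq3$, so Proposition~\ref{prop:gaussian} supplies coprime $x,y$ of opposite parity with $y>0$ and $0<|x|<y$. They satisfy $F_a(x,y)=c$, where $c$ is odd and $c\mid a^2+16$, so $|F_a(x,y)|\le a^2+16$. They also satisfy the Gaussian identity \eqref{eq:gaussian} and the norm identity \eqref{eq:normidentity}. With $t=a\ge1001$, all hypotheses \eqref{eq:lpv-hyp} hold. The one remaining hypothesis is the size condition \eqref{eq:lpv-size}, i.e.\ $y\ge4\sqrt{(a^2+16)/(2a-1)}\approx2.83\sqrt a$.

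The first step, and the one I expect to be the main obstacle, is to prove this lower bound for $y$.

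\textbf{Step 1a: $\gcd(c,P)=1$, where $P=xy(x^2-y^2)$.}
\begin{itemize}
\item A prime dividing $c$ and $x$ divides $y^4$, which is impossible since $\gcd(x,y)=1$.
\item The same argument applies to primes dividing $c$ and $y$.
\item One computes $F_a(x,\pm x)=-4x^4$, so a prime dividing $c$ and $x\mp y$ divides $4x^4$. Since $c$ is odd, this is again impossible.
\end{itemize}

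\textbf{Step 1b: a lower bound for $b$.} Since $N\in\{\pm b\}$ with $b>a$, we have $N\ne a$. Hence \eqref{eq:hoshi-N} gives $b\ge (a^2+16)|P|/c-a$, and also $b\ge a+1$.

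\textbf{Step 1c: convert to a bound on $y$.} Put these into \eqref{eq:normidentity} in the form
\[
(x^2+y^2)^4=c^2(b^2+16)/(a^2+16),
\]
together with $x^2+y^2<2y^2$. What I would try first is a split according to the size of $c$.
\begin{itemize}
\item If $c\ge a^{1/2}$, then the trivial bound $b>a$ should already give $(2y^2)^4> c^2 b^2/(a^2+16)$, which is large enough.
\item If $c$ is small, then $|F_a(x,y)|$ is tiny relative to $y^4$. Factoring $F_a$ over its four real roots, whose mutual distances are at least about $1/a$, forces $x/y$ to lie within about $c/(y^4\cdot a^{-3})$ of a root. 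Alternatively, the identity $c=R-aP$ together with $|R|\le 6y^4$ gives $a|P|\le c+6y^4$. Combined with $b\ge(a^2+16)|P|/c-a$, this should push $y$ past $2.83\sqrt a$.
\end{itemize}
If this split turns out too lossy, the fallback is to use the Gaussian factorization \eqref{eq:gaussian} directly: $(x+\ii y)^4$ must absorb the "non-$c$'' part of $(4+\ii N)$ up to the fixed factor $4+\ii a$. This should show that $x^2+y^2$ cannot be small unless $N=a$.

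The second step applies Theorem~\ref{thm:lpv-specialized}: $x/y$ is a convergent of $\alpha_a$ or of $\gamma_a$. Since $y>1$, we get $y<\bigl(\tfrac{10}{9}(a^2+16)\bigr)^{4/5}$, which is roughly $a^{8/5}$. I would then read off the denominators from \eqref{eq:alpha-cf-even}--\eqref{eq:gamma-cf}.
\begin{itemize}
\item For $\gamma_a$: the convergents are $-1/1,\ 0/1,\ -1/a$. The next denominator is at least $\lfloor a/5\rfloor\cdot a$, which exceeds $a^{8/5}$ because $a^{2/5}>5$ for $a>1000$. So the only candidate with $y>1$ is $(x,y)=(-1,a)$.
\item For $\alpha_a$ with $a=2u$: the denominators are $1,1,u,u+1,2u+1$, and then roughly $u^2/5\cdot2>a^{8/5}$. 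So only the convergents with denominators $u,u+1,2u+1$ survive.
\item For $a=2u-1$: only the convergent with denominator $u$ survives, since $A_3\ge a/3$ kills the rest.
\end{itemize}

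The final step checks this handful of explicit pairs $(x,y)$.
\begin{itemize}
\item First impose the conditions of \eqref{eq:hoshi-xy}, namely coprimality, opposite parity and $0<|x|<y$. These remove several candidates immediately, e.g.\ $(-1,a)$ unless $a$ is even.
\item For each remaining pair, compute $c=F_a(x,y)$ as an explicit polynomial in $a$ (or $u$). Then check that either $|c|>a^2+16$, or $c$ fails to divide $a^2+16$ for $a>1000$. The latter is shown by reducing $a^2+16$ modulo the polynomial $c$ and getting a nonzero remainder that is smaller in absolute value than $|c|$.
\item If some candidate survives these divisibility tests, compute $N$ from \eqref{eq:hoshi-N}. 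The contradiction then comes either from $N$ not being an integer or from $N=a$, contradicting $b>a$.
\end{itemize}
This excludes every case, so $K_a\ne K_b$.
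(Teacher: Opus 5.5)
\textbf{Gap in Step~1c.} This is the step you flagged as the crux, and it does not work as written. In the branch $|c|\ge a^{1/2}$, the norm identity with $b>a$ gives only $(x^2+y^2)^2>|c|$, i.e.\ $16y^8>c^2\ge a$. That yields $y\gtrsim a^{1/8}$, which is nowhere near large enough. In fact no argument based only on the size of $c$ can succeed: even $|c|\approx a^2$ would give only $y\gtrsim\sqrt{a/2}$, while you need $y\ge 4\sqrt{(a^2+16)/(2a-1)}\approx 2.83\sqrt a$. The inequality $|c|<S:=(x^2+y^2)^2$ has to be combined with the structure $c=R-aP$ in \emph{every} case, not only when $c$ is small. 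The Gaussian-factorization fallback is too vague to close this.

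\textbf{How to repair it.} You already have the pieces.
\begin{itemize}
\item Insert $|c|<S<4y^4$ into your own inequality $a|P|\le |c|+6y^4$.
\item The function $s\mapsto s(y^2-s^2)$ is concave, so on $1\le s\le y-1$ it is minimized at an endpoint. Hence $|P|=|x|\,y\,(y^2-x^2)\ge y(y^2-1)$.
\item This gives $a<10y^3/(y^2-1)<10y+7$ for $y\ge2$, so $y>(a-7)/10$, which exceeds the required bound for $a>1000$.
\end{itemize}
The paper obtains the sharper $y>a/4$ by using the sign of $P$ together with $S+R=2\Delta^2$ and $S-R=8x^2y^2$, where $\Delta=y^2-x^2$.
\begin{itemize}
\item For $x=-u<0$ one has $P=uy\Delta>0$, and $c>-S$ gives $auy\Delta<2\Delta^2$, so $y>a/2$.
\item For $x>0$, $c<S$ gives $a\Delta<8xy$, so $a<4y$.
\end{itemize}
With this, your Steps~1a and~1b are unnecessary.

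\textbf{Remaining steps.} Your Steps~2 and~3 are fine and match the paper, which performs them in the opposite order. Every surviving initial convergent already has $|F_a(x,y)|>a^2+16$, and $(2u-1,2u+1)$ also violates the parity condition. So the divisibility checks and the computation of $N$ are never actually needed.
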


\begin{proof}
Assume that $K_a=K_b$.  Since $a,b\neq3$, Proposition~\ref{prop:gaussian} gives integers $x,y,c,N$ satisfying \eqref{eq:normalization} and \eqref{eq:normidentity}.  Put
\[
 D=a^2+16,
 \qquad
 S=(x^2+y^2)^2.
\]
Equation \eqref{eq:normidentity} gives
\begin{equation}\label{eq:cS}
 |c|=S\sqrt{\frac{D}{b^2+16}}<S,
\end{equation}
and Theorem~\ref{thm:hoshi-correspondence} gives
\begin{equation}\label{eq:cD}
 |c|\le D.
\end{equation}

We first obtain a lower bound for $y$.  Let
\[
 \Delta=y^2-x^2>0,
 \qquad
 R=x^4-6x^2y^2+y^4.
\]
Then
\begin{equation}\label{eq:RS}
 R+S=2\Delta^2,
 \qquad
 S-R=8x^2y^2.
\end{equation}
If $x<0$, write $x=-u$ with $u>0$.  Then $P=uy\Delta>0$ and $c=R-auy\Delta$.  From $c>-S$ and \eqref{eq:RS},
\[
 auy\Delta<R+S=2\Delta^2,
\]
so
\[
 y>\frac{au}{2}\ge\frac a2.
\]
If $x>0$, then $P=-xy\Delta$ and $c=R+axy\Delta$.  From $c<S$ and \eqref{eq:RS},
\[
 axy\Delta<8x^2y^2.
\]
Writing $r=x/y\in(0,1)$ gives
\[
 a<\frac{8r}{1-r^2}
  <\frac4{1-r}
  =\frac{4y}{y-x}
  \le4y.
\]
Thus in all cases
\begin{equation}\label{eq:yquarter}
 y>\frac a4.
\end{equation}

We now verify the hypotheses of Theorem~\ref{thm:lpv-specialized} with $t=a$.  By \eqref{eq:cD},
\[
 |F_a(x,y)|=|c|\le a^2+16.
\]
Also $\gcd(x,y)=1$ and $|x|<y$ by Proposition~\ref{prop:gaussian}.  Since $a>1000$,
\[
 \frac{a^2+16}{2a-1}<a
 \qquad\text{and}\qquad
 4\sqrt a<\frac a4.
\]
Together with \eqref{eq:yquarter}, these inequalities imply
\begin{equation}\label{eq:lpv-size-large}
 4\sqrt{\frac{a^2+16}{2a-1}}<y.
\end{equation}
Therefore $x/y$ is a convergent to $\alpha_a$ or $\gamma_a$.

We first exclude the initial convergents allowed by \eqref{eq:alpha-cf-even}--\eqref{eq:gamma-cf}.  If $a=2u$, the initial convergents to $\alpha_a$ with denominator greater than $1$ are
\[
 \frac{u-1}{u},
 \qquad
 \frac{u}{u+1},
 \qquad
 \frac{2u-1}{2u+1}.
\]
If $a=2u-1$, the only initial convergent to $\alpha_a$ with denominator greater than $1$ before the term containing $A_3$ is
\[
 \frac{u-1}{u}.
\]
For $\gamma_a$, the only initial convergent with denominator greater than $1$ before the term containing $\lfloor a/5\rfloor$ is
\[
 -\frac1a.
\]
Direct substitution gives
\begin{align*}
 F_{2u}(u-1,u)&=2u^3+2u^2-4u+1,\\
 F_{2u}(u,u+1)&=-2u^3+2u^2+4u+1,\\
 F_{2u}(2u-1,2u+1)&=80u^2-4,\\
 F_{2u-1}(u-1,u)&=5u^2-5u+1,\\
 F_a(-1,a)&=1-5a^2.
\end{align*}
For $a>1000$, the absolute value of each displayed quantity is greater than $a^2+16$.  Indeed, after subtracting $a^2+16$ from the corresponding absolute value, the five differences are
\begin{gather*}
 2u^3-2u^2-4u-15,
 \qquad
 2u^3-6u^2-4u-17,
 \qquad
 76u^2-20,\\
 u^2-u-16,
 \qquad
 4a^2-17,
\end{gather*}
all of which are positive in the present range.  This contradicts \eqref{eq:cD}.  Hence $x/y$ must occur after these initial convergents.

Suppose first that $x/y$ is a later convergent to $\alpha_a$.  If $a=2u$, the denominator of the next convergent is
\[
 \left\lfloor\frac{u-3}{5}\right\rfloor(2u+1)+(u+1).
\]
Since $u$ is an integer,
\[
 \left\lfloor\frac{u-3}{5}\right\rfloor\ge\frac{u-7}{5},
\]
and hence every later denominator satisfies
\[
 y>\frac{2u^2}{5}-2u
   =\frac{a^2}{10}-a.
\]
If $a=2u-1$, \eqref{eq:alpha-cf-odd} gives
\[
 y\ge A_3u+1
 \ge\frac a3\frac{a+1}{2}+1
 >\frac{a^2}{10}-a.
\]
Thus, in either parity,
\begin{equation}\label{eq:alpha-den}
 y\ge\frac{a^2}{10}-a\ge\frac{a^2}{11}.
\end{equation}
By \eqref{eq:alpha-den},
\[
 y^{5/4}
 \ge\left(\frac{a^2}{11}\right)^{5/4}
 =\frac{a^2\sqrt a}{11^{5/4}}
 >\frac{31}{21}a^2,
\]
where $\sqrt a>31$ and $11^{5/4}<21$.  On the other hand, Theorem~\ref{thm:lpv-specialized} gives
\[
 y^{5/4}<\frac{10}{9}(a^2+16)
 <\frac{101}{90}a^2,
\]
a contradiction.

It remains to consider later convergents to $\gamma_a$.  From \eqref{eq:gamma-cf}, their denominators satisfy
\begin{equation}\label{eq:gamma-den}
 y\ge a\left\lfloor\frac a5\right\rfloor+1
 \ge\frac{a^2-4a+5}{5}
 \ge\frac{a^2}{6}.
\end{equation}
Hence
\[
 y^{5/4}
 \ge\left(\frac{a^2}{6}\right)^{5/4}
 =\frac{a^2\sqrt a}{6^{5/4}}
 >3a^2,
\]
using $\sqrt a>30$ and $6^{5/4}<10$.  But Theorem~\ref{thm:lpv-specialized} gives
\[
 y^{5/4}<a^2+16<2a^2,
\]
again a contradiction.  Thus $K_a\neq K_b$.
\end{proof}

\section{Proof of the classification}

\begin{proof}[Proof of Theorem~\ref{thm:main}]
Assume first that $K_m=K_n$ with $m\neq n$, and put
\[
 a=\min\{m,n\},
 \qquad
 b=\max\{m,n\}.
\]
Then $0<a<b$.

If $a=3$ or $b=3$, Theorem~\ref{thm:hoshi-basic} gives a contradiction.  If $a\le1000$, Proposition~\ref{prop:small} yields
\[
 (a,b)\in\{(1,103),(2,22),(4,956)\}.
\]
If $a>1000$, Proposition~\ref{prop:large} gives a contradiction.  Hence
\[
 K_m=K_n,
 \quad m\neq n
 \quad\Longrightarrow\quad
 \{m,n\}\in
 \bigl\{\{1,103\},\{2,22\},\{4,956\}\bigr\}.
\]

Conversely, the three equalities are exactly Theorem~\ref{thm:hoshi-examples}.  This proves the theorem.
\end{proof}

\begin{corollary}
For every positive integer $n$, there is at most one positive integer $m\neq n$ such that $K_m=K_n$.
\end{corollary}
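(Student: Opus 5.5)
The corollary follows directly from Theorem~\ref{thm:main}. I would argue by contradiction. Fix a positive integer $n$ and suppose there are two distinct positive integers $m_1\neq m_2$, both different from $n$, with $K_{m_1}=K_n$ and $K_{m_2}=K_n$. Then $\{n,m_1\}$ and $\{n,m_2\}$ are two distinct unordered pairs of distinct positive integers, and by Theorem~\ref{thm:main} each lies in
\[
 \mathcal P=\bigl\{\{1,103\},\{2,22\},\{4,956\}\bigr\}.
\]
So $n$ would belong to two different pairs of $\mathcal P$.

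The key observation is that the six integers $1,103,2,22,4,956$ are pairwise distinct, so the pairs in $\mathcal P$ are pairwise disjoint. Hence every positive integer lies in at most one pair of $\mathcal P$, which contradicts the previous paragraph.

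I would also note that equality of fields is transitive. From $K_{m_1}=K_n=K_{m_2}$ we get $K_{m_1}=K_{m_2}$, so $\{m_1,m_2\}\in\mathcal P$ as well. The three pairs $\{n,m_1\},\{n,m_2\},\{m_1,m_2\}$ would then all belong to $\mathcal P$, which is impossible since its pairs are disjoint. Either route gives the contradiction.

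There is no real obstacle here; all the work is in Theorem~\ref{thm:main}. The only point needing care is the case $n=3$. It is covered by Theorem~\ref{thm:hoshi-basic}, since $K_m=K_3$ forces $m=3$, and in any event it is already included in Theorem~\ref{thm:main}, which holds for all distinct positive integers.
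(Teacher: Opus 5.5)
Your proof is correct and is the same argument the paper relies on. The paper states this corollary without proof, as an immediate consequence of Theorem~\ref{thm:main}, because the exceptional pairs $\{1,103\},\{2,22\},\{4,956\}$ are pairwise disjoint; your transitivity route and your separate remark on $n=3$ are valid but not needed, since Theorem~\ref{thm:main} already covers all distinct positive integers.
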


\begin{corollary}
If $n$ is odd and $m\neq n$ is positive, then
\[
 K_m=K_n
 \quad\Longleftrightarrow\quad
 \{m,n\}=\{1,103\}.
\]
\end{corollary}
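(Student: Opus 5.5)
The plan is to derive the second corollary directly from Theorem~\ref{thm:main} by inspecting the parities of the three exceptional pairs. Both implications are needed. Throughout, $n$ is an odd positive integer and $m$ is a positive integer with $m\neq n$.

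For the forward implication, suppose $K_m=K_n$. Since $m\neq n$, Theorem~\ref{thm:main} gives
\[
 \{m,n\}\in\bigl\{\{1,103\},\{2,22\},\{4,956\}\bigr\}.
\]
The pairs $\{2,22\}$ and $\{4,956\}$ consist only of even integers, so neither can contain the odd number $n$. Hence $\{m,n\}=\{1,103\}$. It is worth noting that $n=3$ causes no difficulty here: Theorem~\ref{thm:main} already covers it, and by Theorem~\ref{thm:hoshi-basic} the field $K_3$ has no partner at all. This is consistent with the statement, since $3\notin\{1,103\}$.

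For the reverse implication, suppose $\{m,n\}=\{1,103\}$. Then $m\neq n$ automatically. Theorem~\ref{thm:hoshi-examples}, or equivalently the easy direction of Theorem~\ref{thm:main}, gives $K_1=K_{103}$, so $K_m=K_n$. Both $1$ and $103$ are odd, so this direction is compatible with the hypothesis that $n$ is odd.

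There is no genuine obstacle in this argument. The only point requiring care is checking the parities in the exceptional list: $1$ and $103$ are odd, while $2$, $22$, $4$ and $956$ are even. That check is what ensures the equivalence singles out exactly the pair $\{1,103\}$.
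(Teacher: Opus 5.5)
Your proof is correct and takes essentially the same approach as the paper, which states the corollary without proof as an immediate consequence of Theorem~\ref{thm:main}. The argument is that $\{1,103\}$ is the only exceptional pair containing an odd integer, and the converse is Theorem~\ref{thm:hoshi-examples}.
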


\section*{Declaration of AI use}

This manuscript was developed with extensive use of OpenAI's ChatGPT. ChatGPT played a major role in the selection and refinement of the research problem, the iterative formulation of the main theorem, literature searches, exploration of proof strategies, development and assembly of the proof, verification of calculations, checking the applicability and hypotheses of cited results, verification of citations and bibliographic information, and the drafting of substantially the entire manuscript. The author's role consisted primarily of reviewing, correcting where necessary, and approving the final mathematical arguments and exposition. The author takes full responsibility for the correctness and contents of the final manuscript.


\begin{thebibliography}{99}

\bibitem{Gras1978}
M.-N.~Gras,
\emph{Table num\'erique du nombre de classes et des unit\'es des
extensions cycliques r\'eelles de degr\'e~4 de $\Q$},
Publications math\'ematiques de Besan\c{c}on. Alg\`ebre et th\'eorie des
nombres, no.~2 (1978), article no.~1, 1--133.
\href{https://doi.org/10.5802/pmb.a-17}{doi:10.5802/pmb.a-17}.

\bibitem{Hoshi}
A.~Hoshi,
\emph{On the simplest quartic fields and related Thue equations},
in R.~Feng, W.-S.~Lee, and Y.~Sato (eds.), \emph{Computer Mathematics},
Springer, Berlin--Heidelberg, 2014, pp.~67--85.
\href{https://doi.org/10.1007/978-3-662-43799-5_7}{doi:10.1007/978-3-662-43799-5\_7}.

\bibitem{LPV}
G.~Lettl, A.~Peth\H{o}, and P.~M.~Voutier,
\emph{Simple families of Thue inequalities},
Trans. Amer. Math. Soc. \textbf{351} (1999), no.~5, 1871--1894.
\href{https://doi.org/10.1090/S0002-9947-99-02244-8}{doi:10.1090/S0002-9947-99-02244-8}.

\bibitem{PW}
D.~L.~Pincus and L.~C.~Washington,
\emph{On the field isomorphism problem for the family of simplest quartic fields},
Acta Arith. \textbf{218} (2025), no.~4, 347--356.
\href{https://doi.org/10.4064/aa240619-15-10}{doi:10.4064/aa240619-15-10}.

\end{thebibliography}
\end{document}